\definecolor{grey}{rgb}{.7,.7,.7}
\theoremstyle{plain}
	\newtheorem{thm}{Theorem}[section]
	\newtheorem{prop}[thm]{Proposition}
	\newtheorem{cor}[thm]{Corollary}
\theoremstyle{definition}
	\newtheorem{rem}[thm]{Remark}
\numberwithin{equation}{section}
\newcommand{\R}{{\mathbb{R}}}
\newcommand{\N}{{\mathbb{N}}}
\newcommand{\g}{{\gamma}}
\newcommand{\fr}{{\partial}}
\newcommand{\calH}{\mathcal{H}}
\newcommand{\J}{\mathcal{J}}
\newcommand{\K}{\mathcal{K}}
\newcommand{\de}{\mathrm{d}}
\DeclareMathOperator{\per}{Per}
\DeclareMathOperator{\diam}{diam}
\DeclareMathOperator{\side}{s}
\DeclareMathOperator{\ap}{a}
\DeclareMathOperator{\circum}{R}
\title[Optimization of the anisotropic Cheeger constant]{Optimization of the anisotropic Cheeger constant with respect to the anisotropy}
\author[E.~Parini]{Enea Parini}
\author[G.~Saracco]{Giorgio Saracco}
\address[E. Parini]{Aix Marseille Univ, CNRS, Centrale Marseille, I2M, 39 Rue Fr\'ed\'eric Joliot Curie, F-13453 Marseille CEDEX 13, FRANCE}
\email{enea.parini@univ-amu.fr}
\address[G. Saracco]{Dipartimento di Matematica, Universit\`a di Trento, via Sommarive 14, I-38123 Povo (TN), ITALY} \email{giorgio.saracco@unitn.it}
\subjclass[2020]{Primary: 49Q10. Secondary: 35P15, 58B20.}
\keywords{Cheeger constant, Cheeger set, anisotropic perimeter}
\thanks{E.~P.~is partially supported by the project ANR-18-CE40-0013 SHAPO financed by the French Agence Nationale de la Recherche (ANR). G.~S.~is a member of the INdAM--GNAMPA group and has been partially supported by the INdAM--GNAMPA 2022 Project ``Stime ottimali per alcuni funzionali di forma'' and the UNITN Starting Grant Giovani 2021 ``WeiCAp''. This work has been started during a visit of E.~P.~in Pavia supported by the Blue Sky Research Project.}
\begin{document}

\definecolor{eqeqeq}{rgb}{0.85,0.85,0.85}

\begin{abstract}
Given an open, bounded set $\Omega$ in $\R^N$, we consider the minimization of the anisotropic Cheeger constant $h_K(\Omega)$ with respect to the anisotropy $K$, under a volume constraint on the associated unit ball. In the planar case, under the assumption that $K$ is a convex, centrally symmetric body, we prove the existence of a minimizer. Moreover, if $\Omega$ is a ball, we show that the optimal anisotropy $K$ is not a ball and that, among all regular polygons, the square provides the minimal value.
\end{abstract}

 \hspace{-3cm}
 {
 \begin{minipage}[t]{0.6\linewidth}
 \begin{scriptsize}
 \vspace{-3cm}
This is a pre-print of an article published in \emph{Canad. Math. Bull.}. The final authenticated version is available online at: \href{https://doi.org/10.4153/S0008439523000152}{https://doi.org/10.4153/S0008439523000152}
 \end{scriptsize}
\end{minipage} 
}

\maketitle


\section{Introduction}

Given an open, bounded set $\Omega$ in $\mathbb{R}^N$, the \emph{Cheeger problem} amounts to finding sets attaining the \emph{Cheeger constant} $h(\Omega)$, which is defined as
\begin{equation}\label{eq:h_Cheeger}
h(\Omega) := \inf \left\{\,\frac{\per(E)}{|E|}\,:\, E\subset \Omega, \, |E|>0 \,\right\},
\end{equation}
where $\per(E)$ denotes the variational perimeter of $E$, and $|E|$ its $N$-dimensional Lebesgue measure. This problem has been first introduced in the Riemannian setting in~\cite{Che70}, and it has been deeply studied ever since, given its many applications in different problems. We refer the interested reader to the two surveys~\cite{Leo15, Par11}. The computation of the constant and the geometric characterization of minimizers is by now well understood in the two-dimensional case for convex sets~\cite{KL06}, strips~\cite{LP16} and more general sets~\cite{LNS17, LS20}. 

A generalization of this problem can be given in the anisotropic Euclidean space, that is, the Euclidean space endowed with a norm induced by a convex, central-symmetric set. In turns, this underlying anisotropic metric induces a notion of anisotropic perimeter, that can be used to define an anisotropic analogous of~\eqref{eq:h_Cheeger}.

More precisely, let $\K_N$ be the class of (non-empty) open, bounded and centrally symmetric (with respect to the origin) convex sets in $\R^N$. Given any $K\in \K_N$, its \emph{polar set} $K^\circ$ is defined as
\[ 
K^\circ := \{ x \in \R^N\,|\,x \cdot y < 1 \text{ for every } y \in K \}.
\]
For every $K \in \K_N$, its polar set $K^\circ$ belongs to $\K_N$, and $(K^\circ)^\circ = K$ (we refer to~\cite{Sch14book} for these standard facts). In particular, given any such $K$, the map $\Phi_K^\circ$ defined as
%
\begin{equation}\label{eq:dual_norm}
\Phi_K^\circ(x) := \sup \{x\cdot y\colon y\in K\}\,,
\end{equation}
is a norm over $\mathbb{R}^N$, called \emph{polar norm} of $K$. By definition,
\[ 
K^\circ = \{ x \in \R^N \,|\,\Phi_K^\circ(x) < 1\},
\]
that is, $K^\circ$ is the unit ball with respect to the metric induced by $\Phi^\circ$.

By means of the polar norm, it is possible to define an anisotropic perimeter for any Borel set $E$ as
\begin{equation}\label{eq:per_K}
\per_K(E):= \int_{\fr^* E} \Phi_K^\circ(\nu_E(x))\, \de\calH^{N-1}(x),
\end{equation}
where $\fr^*E$ denotes the reduced boundary of $E$, see~\cite{Mag12book}.

Notice that such a perimeter is invariant under translation but, in general, not under the effect of the rotation group $\mathrm{SO}(N)$. The \emph{anisotropic isoperimetric inequality}~\cite{Tay78} states that, among sets $E$ of fixed volume, the \emph{unique} anisotropic perimeter minimizer (up to translations) is given by a dilation of the convex set $K$, which is called \emph{Wulff shape} associated to $\Phi^\circ$, i.e.,
\begin{equation}\label{eq:iso_ani}
\per_K(E) 
\ge 
\per_K(K_E)=N|K|^{\frac 1N}|K_E|^{\frac{N-1}{N}},
\end{equation}
where $K_E$ is the dilation of $K$ such that $|E|=|K_E|$. 

With this notion of anisotropic perimeter~\eqref{eq:per_K} at our disposal, we can define the \emph{$K$-Cheeger constant} of a set $\Omega$ analogously to~\eqref{eq:h_Cheeger} as
\begin{equation}\label{eq:K-C}
h_K(\Omega) := \inf \left \{ \frac{\per_K(E)}{|E|} \colon E\subset \Omega\,, |E|>0 \right\}.
\end{equation}
Sets attaining the infimum are called \emph{$K$-Cheeger sets} of $\Omega$, and it is well-known that they exist for any $\Omega$ regular enough with finite measure~\cite{KN08, Sar18}. The constant $h_K(\Omega)$, whenever $\Omega$ is Lipschitz regular, can be thought of as the first eigenvalue of the anisotropic $1$-Laplacian~\cite{KN08}; it is related to anisotropic capillarity problems~\cite{ABT15}, and it is relevant for applications to image reconstruction~\cite{CFM09}.

A usual problem for shape functionals is to determine which shapes $\Omega$ minimize the functional $\Omega \mapsto h_K(\Omega)$ under a volume constraint on $\Omega$. The anisotropic isoperimetric inequality~\eqref{eq:iso_ani} immediately implies that the minimizing shape is a dilation of $K$ itself, and, in particular, we have
\begin{equation}\label{eq:fixed_K}
\inf\left\{\,h_K(\Omega)\,:\, |\Omega|=1 \,\right\} = \frac{\per_K(K)}{|K|} = N.
\end{equation}
While in~\eqref{eq:fixed_K} the set $K\in \K_N$ providing the metric is fixed and one minimizes among $\Omega$, in the present paper we fix $\Omega$ and want to minimize among the metrics $K$ --- under suitable constraints.

There are two possible reasonable choices for the volume constraint: either on the volume of the Wulff shape $|K|$, or on the volume of the unit ball $|K^\circ|$, leading to the study of the two (scaling invariant) functionals
\begin{align}
\mathcal{F}_\Omega[K] &:= h_K(\Omega) |K|^{-\frac{1}{N}}\,,\label{eq:funct_vincolo_K}\\
\J_\Omega[K] &:= h_K(\Omega) |K^\circ|^{\frac{1}{N}}\,.\label{eq:funct}
\end{align}
%
Since we are interested in the metric, it feels more natural to impose a constraint on the volume of the unit ball $K^\circ$, that is, to consider~\eqref{eq:funct}. It is noteworthy that not only it is the more natural choice, but also the minimization of~\eqref{eq:funct_vincolo_K} is a trivial task, whenever $\Omega$ is fixed in $\K_N$. Indeed, by the anisotropic isoperimetric inequality~\eqref{eq:iso_ani}, for every $E \subset \Omega$ one has
\[ 
\per_K(E)|K|^{-\frac{1}{N}} \geq \per_K(K_E) |K|^{-\frac{1}{N}} = N|K_E|^\frac{N-1}{N} = N|E|^\frac{N-1}{N}\,,
\]
with equality holding if and only if $E$ equals $K_E$ up to a translation. Therefore,
\[
\frac{\per_K(E)}{|E|}|K|^{-\frac{1}{N}} \geq N|E|^{-\frac{1}{N}} \geq N|\Omega|^{-\frac{1}{N}}.
\]
Passing to the infimum on all $E \subset \Omega$ we obtain
\begin{equation}
h_K(\Omega) |K|^{-\frac{1}{N}} \geq N|\Omega|^{-\frac{1}{N}}\,,
\end{equation}
for all possible $K\in \K_N$. In particular, equality holds if and only if $K$ coincides with $\Omega$ (up to translations and dilations), and $\Omega$ is the associated Cheeger set. Summing up, if $\Omega \in \K_N$, one has
\begin{equation}\label{eq:minimization_F}
\inf_{K\in \K_N} \mathcal{F}_\Omega[K] = \mathcal{F}_\Omega[\Omega] = N|\Omega|^{-\frac{1}{N}},
\end{equation}
and $K=\Omega$ is the unique minimizer.

On the other hand, the minimization of~\eqref{eq:funct} is far from being trivial. Notice that we can immediately rewrite $\J_\Omega$ as
\begin{equation} \label{eq:rewrite}
\J_\Omega[K] = \mathcal{F}_\Omega[K] (|K||K^\circ|)^{\frac1N}.
\end{equation}
We have already observed that the first factor of the above product is minimized in $\K_N$ by the choice $K=\Omega$, provided that $\Omega\in \K_N$. The second factor is known as the \emph{Mahler volume} of $K$, a shape functional which is invariant under invertible affine transformations, and which is well known to be maximized by balls (in general, ellipsoids~\cite{San49}). Regarding its minimization, it is conjectured to be minimized by (affine transformations of) hypercubes (in general, Hanner's polytopes). This holds true in dimension $N=2$, see~\cite{Mah39}, and under some additional assumptions in higher dimension, see~\cite{Rei86}. We refer to the expository article~\cite[Chap.~3.3]{Tao08} for further details. Therefore, when $N=2$ and $\Omega$ is a parallelogram, the minimization is trivial, while otherwise a competition between the two terms arises.

Similar problems of finding the best metric have been studied, for instance, for the eigenvalues of the Laplace--Beltrami operator on the sphere $\mathbb{S}^2$, see~\cite{Her70, Nad02, NS17, KNPP21}. In the same spirit, one could consider minimization problems with respect to the underlying metric for different shape functionals, e.g., depending on the eigenvalues of the anisotropic $p$-Laplacian, the $K$-capacity or $K$-torsion. We leave these problems as further research directions, as they have an intrinsic higher difficulty due to the fact that they involve non-geometrical quantities.

In the present paper, we focus on the planar case. Moreover, we will always suppose that the set $\Omega \subset \R^2$ is convex, since in this case we can exploit the structure of $K$-Cheeger sets granted by~\cite[Thm.~5.1]{KN08}. In \cref{sec:results} we prove that there exist minimizers of $\J_\Omega$, while its supremum is $+\infty$. 
In \cref{sec:example} we show by means of an example that the problem is indeed nontrivial: in the case $\Omega=B$, we prove that the square is the shape that yields the lowest energy among all regular $n$-gons, and we conjecture it to be the best shape among all possible centrally symmetric anisotropies.

\section{Preliminary results}

%

\subsection{The anisotropic Cheeger problem}

Let $K \in \K_N$, and let $\Omega \subset \R^N$ be an open, bounded (non-empty) set. A $K$-Cheeger set is a non-negligible measurable set $C_K \subset \Omega$ such that
\[ 
h_K(\Omega) = \frac{\per_K(C_K)}{|C_K|}.
\]
Whenever $\Omega$ is a convex, two-dimensional set, there exists a unique $K$-Cheeger set $C_K \subset \Omega$. Moreover, it is possible to give a complete geometrical characterization of $C_K$, which we recall in \cref{thm:struttura}. This has been first proved in the isotropic Euclidean setting~\cite{KL06} and later extended to the anisotropic Euclidean setting in~\cite{KN08}. For the ease of the reader, before stating the result, we recall the definition of Minkowski addition and difference between two sets $E$ and $F$. Given any $x\in \R^2$, if we denote by $E+x$ the translation of the set $E$ by $x$, we have
\begin{align*}
	E\oplus F &:= \cup_{x\in F} (E+x),\\
	E\ominus F &:= \cap_{x\in F} (E-x).
\end{align*}
When $F$ is chosen as a ball $B$, the Minkowski addition $E\oplus B$ can be thought as an outward regularization of the set $E$, and the Minkowski difference $E\ominus B$ as an inward regularization. We remark that, in general, these operations do not commute and one only has the set inclusion $(E\ominus B) \oplus B \subset E$.

\begin{thm}\cite[Thm.~5.1]{KN08}\label{thm:struttura}
	Let $\Omega \subset \R^2$ be an open, bounded, convex set. Then, there exists a unique  $K$-Cheeger set $C_K$ of $\Omega$. Moreover, $C_K$ is convex and we have
	\[
	C_K = \Omega^\rho \oplus \rho K\,,
	\]
	where $\Omega^\rho := \Omega \ominus \rho K$ and $\rho$ is the inverse of the $K$-Cheeger constant. Moreover, $\rho$ is the unique value such that $|\Omega^{\rho}|=\rho^2 |K|$.
\end{thm}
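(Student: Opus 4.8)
The plan is to prove existence by the direct method, promote the minimizer to a convex set by a convex-hull argument, pin down the scalar $\rho$ via a monotone one-dimensional equation, and finally identify the Cheeger set through a first-variation (or calibration) argument; uniqueness will then come for free.

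\textbf{Existence and convexity.} First I would fix a minimizing sequence $E_n\subset\Omega$ for the quotient in~\eqref{eq:K-C}. Testing with $E=\Omega$ bounds the quotient from above, so $\per_K(E_n)$ stays bounded; moreover the anisotropic isoperimetric inequality~\eqref{eq:iso_ani} gives $\per_K(E_n)\ge 2|K|^{1/2}|E_n|^{1/2}$, whence $|E_n|$ is bounded away from $0$ (otherwise the quotient would blow up). By $BV$-compactness and lower semicontinuity of $\per_K$ I extract an $L^1$-limit $C$ realizing $h_K(\Omega)$, so a $K$-Cheeger set exists. To make $C$ convex I replace it by its convex hull: since $\Omega$ is convex, $\operatorname{conv}(C)\subset\Omega$, while passing to the convex hull does not decrease the area and does not increase the anisotropic perimeter (a standard consequence of the convexity and $1$-homogeneity of the integrand $\Phi_K^\circ$). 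Hence the quotient does not increase and I may assume $C$ convex.

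\textbf{The inner Cheeger formula and the upper bound.} Next I would define $\rho$ through a monotone scalar equation. Set $g(\rho):=|\Omega\ominus\rho K|$; this is continuous, non-increasing, equals $|\Omega|>0$ at $\rho=0$, and vanishes once $\rho K$ no longer fits inside $\Omega$. Thus $\rho\mapsto g(\rho)-\rho^2|K|$ is continuous and strictly decreasing from a positive to a negative value, hence has a unique zero $\rho$, i.e.\ $|\Omega^\rho|=\rho^2|K|$ with $\Omega^\rho=\Omega\ominus\rho K$. For this $\rho$ I test the competitor $C^\ast:=\Omega^\rho\oplus\rho K\subset\Omega$. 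Using the planar anisotropic Steiner formulas
\begin{equation*}
|A\oplus\rho K|=|A|+\rho\,\per_K(A)+\rho^2|K|, \qquad \per_K(A\oplus\rho K)=\per_K(A)+2\rho|K|,
\end{equation*}
with $A=\Omega^\rho$ and recalling $\per_K(K)=2|K|$ from~\eqref{eq:iso_ani}, a direct computation shows that $\per_K(C^\ast)/|C^\ast|=1/\rho$ exactly when $|\Omega^\rho|=\rho^2|K|$. Therefore $h_K(\Omega)\le 1/\rho$.

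\textbf{Optimality, characterization, and the main obstacle.} It remains to show $h_K(\Omega)\ge 1/\rho$ and that every $K$-Cheeger set equals $C^\ast$. The geometric route is to argue that the free portion $\partial C\cap\Omega$ of any Cheeger set has constant anisotropic curvature $h_K(\Omega)$ — the Euler--Lagrange condition from the first variation of $\per_K(\cdot)/|\cdot|$, with the contact part $\partial C\cap\partial\Omega$ acting as an obstacle. In the plane, curves of constant anisotropic curvature $1/r$ are precisely translates of $\partial(rK)$, so the free boundary of $C$ is assembled from arcs of translates of $\partial(\rho K)$, which is equivalent to the reconstruction $C=(C\ominus\rho K)\oplus\rho K$. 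The inclusion $C\subset\Omega$ gives $C\ominus\rho K\subset\Omega^\rho$, and the contact arcs on $\partial\Omega$ force the reverse inclusion, so $C\ominus\rho K=\Omega^\rho$ and $C=C^\ast$; consistency with the upper bound fixes the curvature at $1/\rho$, and uniqueness is automatic since $C^\ast$ is a single explicit set. The hard part will be making this step rigorous: either justifying the anisotropic first variation together with the obstacle structure and the reconstruction identity, or—more robustly, bypassing all boundary regularity—constructing a calibration $\sigma\in L^\infty(\Omega;\R^2)$ with $\sigma\in K$ a.e., $\operatorname{div}\sigma=1/\rho$ on $C^\ast$, and $\sigma\cdot\nu_{C^\ast}=\Phi_K^\circ(\nu_{C^\ast})$ on $\partial C^\ast$, which upon integrating $\operatorname{div}\sigma$ over an arbitrary competitor $E$ yields $\per_K(E)\ge|E|/\rho$ with equality forcing $E=C^\ast$. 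The delicate point in the latter is that the naive radial field has the wrong divergence, so $\sigma$ must be built through the nearest-point projection onto the inner core $\Omega^\rho$; everything else—existence, convexity, and the inner Cheeger formula—is soft by comparison.
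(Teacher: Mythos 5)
First, note that the paper itself offers no proof of this statement: it is quoted directly from \cite[Thm.~5.1]{KN08} (the anisotropic extension of Kawohl--Lachand-Robert), so your attempt can only be measured against that proof. The portions you actually execute are correct and match the standard scheme. Existence by the direct method is fine; the convex-hull step needs the extra remark that the minimizer must first be replaced by an indecomposable component, since $\per_K(\operatorname{conv}(E))\le\per_K(E)$ is false for disconnected sets (two distant translates of $K$), while for connected planar sets it follows from Jensen's inequality in the form $\int_\gamma \Phi_K^\circ(\nu)\,\de\calH^1\ge \Phi_K^\circ\bigl(\int_\gamma \nu\,\de\calH^1\bigr)$ applied to the arcs replaced by segments of the hull. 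The uniqueness of the scalar $\rho$ and, above all, the Steiner computation are right: since $\per_K(A)=2V(A,K)$ in the plane, your two displayed formulas hold for convex $A$, and they give $\per_K(C^*)/|C^*|=1/\rho$ precisely when $|\Omega^\rho|=\rho^2|K|$, hence $h_K(\Omega)\le 1/\rho$.

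The genuine gap is that everything after that --- the matching lower bound, the identification of every Cheeger set with $C^*$, and uniqueness --- is exactly the content of \cite{KN08}, and your sketch of it has concrete defects. (i) The claim that planar curves of constant anisotropic curvature $1/r$ are translates of $\partial(rK)$ presupposes $K$ smooth and strictly convex; for general $K\in\K_2$ (and in this paper the theorem is applied with $K$ a regular polygon) the anisotropic curvature is not classically defined, the first variation must be interpreted in a crystalline sense, and the regularity of the free boundary together with its tangential contact with $\partial\Omega$ --- i.e.\ the rolling property $C=(C\ominus tK)\oplus tK$ with $t=1/h_K(\Omega)$ --- is the technical heart of the cited proof, not a routine step one can defer. (ii) Your closing mechanism (``the contact arcs on $\partial\Omega$ force the reverse inclusion'') is not how the argument closes; what works is a measure comparison: once the rolling property is known, Steiner plus the Cheeger identity give $|C\ominus tK|=t^2|K|$; the inclusion $C\ominus tK\subset\Omega^t$ and the strict monotonicity of $s\mapsto|\Omega^s|-s^2|K|$ force $t\le\rho$, the upper bound forces $t=\rho$, and then $C\ominus\rho K$ and $\Omega^\rho$ are nested convex sets of equal area, hence equal, which yields both $C=C^*$ and uniqueness. (iii) The calibration you propose is insufficient as stated: to deduce $\per_K(E)\ge|E|/\rho$ for an arbitrary competitor $E\subset\Omega$ by integrating $\operatorname{div}\sigma$, you need $\operatorname{div}\sigma\ge 1/\rho$ on all of $\Omega$ (with equality on $C^*$), since competitors are not confined to $C^*$; constructing such a globally $K$-valued field with prescribed divergence is itself a nontrivial result, not a fallback. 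In sum, your proposal establishes only the easy inequality $h_K(\Omega)\le 1/\rho$; the substance of the theorem remains unproven.
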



\subsection{The Mahler volume}

Let $K \subset \R^N$ be a convex set. The \emph{Mahler volume} of $K$ is the quantity
\[ V(K) := |K||K^\circ|,\] 
where $K^\circ$ is the polar set of $K$. It can be proven that the Mahler volume is invariant under invertible affine transformations. The following result holds true.

\begin{prop} \label{mahler}
Let $K$ be any convex set in $\mathcal{K}_2$, and let $V(K)$ be its Mahler volume. Then
\[ 8= V(Q) \leq V(K) \leq V(B) = \pi^2,\]
where $Q$ is a square, and $B$ is a ball.
\end{prop}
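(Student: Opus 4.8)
The plan is to prove the two inequalities separately, after recording two invariances of $V$ that license a convenient normalization. First, $V$ is self-dual: $V(K^\circ)=|K^\circ|\,|(K^\circ)^\circ|=|K^\circ|\,|K|=V(K)$. Second, for every invertible linear map $T$ one has $|TK|=|\det T|\,|K|$ while $(TK)^\circ=(T^\top)^{-1}K^\circ$ has volume $|\det T|^{-1}|K^\circ|$; hence $V(TK)=V(K)$, so $V$ is an affine invariant and I am free to replace $K$ by any linear image in each bound. The two reference values are immediate: for a disk of radius $r$ the polar is the disk of radius $1/r$, so $V(B)=\pi r^2\cdot\pi r^{-2}=\pi^2$; for the square $Q=[-1,1]^2$ the polar is the diamond $\{|x|+|y|\le 1\}$ of area $2$, so $V(Q)=4\cdot 2=8$.

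For the upper bound $V(K)\le\pi^2$ I would run the classical Blaschke--Santal\'o argument via Steiner symmetrization. The key lemma is that symmetrizing $K$ about a line through the origin leaves $|K|$ unchanged while not decreasing $|K^\circ|$, and hence does not decrease $V$. Iterating symmetrizations along a suitable sequence of directions produces convex bodies converging in the Hausdorff distance to the disk $B$ of the same area; since $V$ is continuous with respect to this convergence, $V(K)\le V(B)=\pi^2$, with equality precisely when $K$ is an ellipse.

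For the lower bound $V(K)\ge 8$ I would use affine invariance to normalize $K$ so that a parallelogram of maximal area inscribed in $K$ is the diamond $D$ with vertices $(\pm1,0),(0,\pm1)$. Maximality forces the support lines of $K$ at these four vertices to be the sides of the square $Q=[-1,1]^2$, so that $D\subseteq K\subseteq Q$; polarity reverses these inclusions and, together with $Q^\circ=D$ and $D^\circ=Q$, gives $D\subseteq K^\circ\subseteq Q$ as well. Writing $|K|=2+4a$ and $|K^\circ|=2+4a^\circ$, where $a,a^\circ\in[0,\tfrac12]$ measure the area trapped between a first-quadrant boundary arc and its chord, the target $V(K)\ge 8$ becomes $(1+2a)(1+2a^\circ)\ge 2$; the equality cases $(a,a^\circ)\in\{(0,\tfrac12),(\tfrac12,0)\}$ are exactly the parallelograms.

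I expect the lower bound to be the main obstacle. The upper bound reduces to the monotonicity of $|K^\circ|$ under Steiner symmetrization, which is standard. For the lower bound, the naive sandwich $D\subseteq K,K^\circ\subseteq Q$ only yields $V(K)\ge 2\cdot 2=4$; recovering the missing factor $2$ requires exploiting the genuine tradeoff encoded in the pointwise polar duality between the boundary arc of $K$ and that of $K^\circ$---enlarging one arc (increasing $a$) tightens the support lines and forces the dual arc to shrink (decreasing $a^\circ$). Making this quantitative is precisely the content of Mahler's planar inequality, so in the write-up I would either carry out this arc-duality estimate or simply invoke~\cite{Mah39} for the lower bound and~\cite{San49} for the upper.
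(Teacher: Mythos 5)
The first thing to note is that the paper does not actually prove this proposition: both bounds are quoted from the literature (the upper bound, Blaschke--Santal\'o, from~\cite[Sect.~10.5]{Sch14book}; the lower bound from~\cite{Mah39}, with~\cite{Tho07} given as an accessible reference). So your fallback option --- ``simply invoke~\cite{Mah39} for the lower bound'' and the literature for the upper --- coincides exactly with what the paper does, and in that form your write-up is adequate. Moreover, the parts you do prove completely are correct: the self-duality and affine invariance of $V$, and the endpoint computations $V(B)=\pi^2$, $V(Q)=4\cdot 2=8$. Your upper-bound sketch is also the standard symmetrization proof and is sound, modulo the key lemma (Steiner symmetrization through the origin preserves $|K|$ and does not decrease $|K^\circ|$), which is itself a nontrivial quoted ingredient.

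The self-contained argument you sketch for the lower bound, however, has a genuine gap precisely where the theorem lives, and you are right to flag it. The normalization via a maximal inscribed parallelogram, the sandwich $D\subseteq K\subseteq Q$, the dual sandwich $D\subseteq K^\circ\subseteq Q$ obtained by polarity, and the arithmetic reduction to $(1+2a)(1+2a^\circ)\ge 2$ are all correct; but the ``arc-duality estimate'' that would turn the qualitative tradeoff (larger $K$ forces smaller $K^\circ$) into that quantitative inequality is never formulated, let alone proved, and what you actually establish yields only $V(K)\ge 4$. Note that an estimate of the form $a+a^\circ\ge \tfrac12$ (equivalently, $|K|+|K^\circ|\ge 6$ in your normalization) would suffice, since then $(1+2a)(1+2a^\circ)\ge 1+2(a+a^\circ)\ge 2$; but proving such a statement is essentially Mahler's theorem itself, and the classical proofs (including Mahler's original one and the one in~\cite{Tho07}) do not proceed this way --- they go through polygonal approximation and an induction that removes pairs of opposite vertices without increasing $V$. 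So the honest summary is: as a proof by citation your proposal matches the paper; as a direct proof, the lower bound --- which is the substantive half --- is incomplete, and the equality characterization (parallelograms only) is likewise asserted rather than derived.
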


The upper bound is known as \emph{Blaschke--Santal\'{o} inequality}, whose proof can be found in~\cite[Sect.~10.5]{Sch14book}. The lower bound was proven in~\cite{Mah39}, and an accessible proof can be found in~\cite{Tho07}. 


\subsection{Uniform convergence of polar norms}

The following result provides a link between convergence of the metrics in the Hausdorff distance of convex sets, and local uniform convergence of the associated polar norms.


\begin{prop} \label{uniformcontinuity}
	Let $K \in \K_N$ and $\{K_n\}_{n\in \N} \subset \K_N$ be a sequence such that $K_n\to K$ in the Hausdorff topology. Then, $\Phi_{K_n}^\circ \to \Phi_{K}^\circ$ locally uniformly. 
\end{prop}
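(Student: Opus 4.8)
The plan is to recognize $\Phi_K^\circ$ as the \emph{support function} of the convex body $K$ and then to reduce the statement to the classical equivalence between Hausdorff convergence of convex bodies and uniform convergence of their support functions on the unit sphere. Indeed, by~\eqref{eq:dual_norm} the map $\Phi_K^\circ(x) = \sup\{x\cdot y : y \in K\}$ is exactly the support function of $K$, and in particular it is positively $1$-homogeneous, i.e.\ $\Phi_K^\circ(\lambda x) = \lambda\,\Phi_K^\circ(x)$ for every $\lambda \geq 0$. Writing $B$ for the Euclidean unit ball, so that $\Phi_{\e B}^\circ = \e\,|\cdot|$, the whole proof will hinge on translating set inclusions into pointwise inequalities between support functions.

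First I would record the key identity. Recalling that the Hausdorff distance can be written as
\[
d_H(K_n, K) = \inf\{\e \geq 0 : K_n \subset K \oplus \e B \text{ and } K \subset K_n \oplus \e B\},
\]
and using the additivity $\Phi_{K \oplus \e B}^\circ = \Phi_K^\circ + \e\,|\cdot|$ of support functions under Minkowski sum, the inclusion $K_n \subset K \oplus \e B$ is equivalent to $\Phi_{K_n}^\circ(\theta) \leq \Phi_K^\circ(\theta) + \e$ for all $\theta$ in the sphere $\mathbb{S}^{N-1}$, and symmetrically for the reverse inclusion (support functions being insensitive to passing to closures, the openness of the sets in $\K_N$ causes no trouble). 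Combining the two inclusions yields
\[
\sup_{\theta \in \mathbb{S}^{N-1}} \bigl| \Phi_{K_n}^\circ(\theta) - \Phi_K^\circ(\theta) \bigr| = d_H(K_n, K),
\]
an identity that may alternatively be quoted directly from~\cite[Sect.~1.8]{Sch14book}.

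With this in hand the conclusion follows in two short steps. Since $K_n \to K$ in the Hausdorff topology the right-hand side tends to $0$, giving uniform convergence $\Phi_{K_n}^\circ \to \Phi_K^\circ$ on $\mathbb{S}^{N-1}$; to upgrade this to local uniform convergence I would exploit $1$-homogeneity, estimating for every $R > 0$ and every $x$ with $0 < |x| \leq R$
\[
\bigl| \Phi_{K_n}^\circ(x) - \Phi_K^\circ(x) \bigr|
= |x|\,\Bigl| \Phi_{K_n}^\circ\!\bigl(\tfrac{x}{|x|}\bigr) - \Phi_K^\circ\!\bigl(\tfrac{x}{|x|}\bigr) \Bigr|
\leq R\, d_H(K_n, K),
\]
while $x=0$ is trivial as both functions vanish there. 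Taking the supremum over $|x|\le R$ and letting $n\to\infty$ gives uniform convergence on every ball, i.e.\ local uniform convergence. The argument is essentially routine once the identification with support functions is made; the only point requiring genuine care is the first step, namely the passage from inclusions to pointwise inequalities via the additivity of support functions, after which the homogeneity upgrade is immediate, so I do not anticipate any real obstacle.
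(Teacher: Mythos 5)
Your proof is correct, and its overall skeleton matches the paper's: both reduce matters to the support function on the unit sphere and then invoke $1$-homogeneity to pass to local uniform convergence. The implementation of the key step differs, though. The paper works \emph{multiplicatively}: since $K \in \K_N$ is open, nonempty and centrally symmetric, the origin is an interior point of $K$, so Hausdorff convergence yields the dilation sandwich $(1-\e)K \subset K_n \subset (1+\e)K$ for large $n$; monotonicity and homogeneity of the support function then give $(1-\e)\Phi_K^\circ \le \Phi_{K_n}^\circ \le (1+\e)\Phi_K^\circ$ on $\mathbb{S}^{N-1}$, with no external citation needed. You instead work \emph{additively}, converting the inclusions $K_n \subset K \oplus \e B$ and $K \subset K_n \oplus \e B$ into the exact identity $d_H(K_n,K) = \sup_{\mathbb{S}^{N-1}} \lvert \Phi_{K_n}^\circ - \Phi_K^\circ\rvert$, which is the classical correspondence in~\cite[Sect.~1.8]{Sch14book}. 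Your route buys two things: it does not use the fact that the origin lies in the interior of $K$ (so it would survive verbatim for convex bodies that are not centrally symmetric), and it produces a quantitative Lipschitz rate, namely $\sup_{|x|\le R} \lvert \Phi_{K_n}^\circ(x) - \Phi_K^\circ(x)\rvert \le R\, d_H(K_n,K)$, rather than mere convergence. The paper's version is more self-contained, exploiting the structure of $\K_N$ to avoid quoting the support-function/Hausdorff-metric isometry. Your attention to the two delicate points --- that support functions ignore closures, and that the backward implication from pointwise inequalities to inclusions requires convexity --- is exactly where care is needed, and you handled both.
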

\begin{proof}
	Let $\varepsilon > 0$, for $n$ sufficiently big, $(1-\varepsilon)K \subset K_n \subset (1+\varepsilon)K$. If $y \in \mathbb{S}^{N-1}$, it holds
	\[ 
	\Phi_{(1-\varepsilon)K}^\circ(y) \leq \Phi_{K_n}^\circ(y) \leq \Phi_{(1+\varepsilon)K}^\circ(y)
	\]
	which implies
	\[ 
	(1-\varepsilon)\Phi_{K}^\circ(y) \leq \Phi_{K_n}^\circ(y) \leq (1+\varepsilon)\Phi_{K}^\circ(y). 
	\]
	By the $1$-homogeneity of $\Phi_{K}^\circ$ and $\Phi_{K_n}^\circ$ we obtain the claim.
\end{proof}

\section{Main results}\label{sec:results}

Throughout this section we will restrict to the two-dimensional case, and we will suppose that $\Omega$ is an open, bounded, convex planar set. We consider the functional $\J_\Omega$ introduced in~\eqref{eq:funct}, that is,
\[ 
\J_\Omega[K] := h_K(\Omega) |K^\circ|^{\frac{1}{2}},
\]
where the multiplicative factor $|K^\circ|^{\frac12}$ appears in order to make it scale invariant.

\begin{prop} \label{maximization}
Let $\Omega \subset \R^2$ be an open, bounded, convex set. Let $V>0$, and let $\{K_n\}_{n \in \N} \subset \K_2$ be a sequence such that $|K_n|=V$ for every $n \in \N$, with $\diam(K_n) \to +\infty$. Then,
\[
 \lim_{n \to +\infty} \J_\Omega[K_n] = +\infty.
\]
\end{prop}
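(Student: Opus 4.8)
The plan is to observe that the blow-up of $\J_\Omega[K_n]$ cannot come from the polar-volume factor, and must instead be driven by the anisotropic Cheeger constant itself. Writing $\J_\Omega[K_n] = h_{K_n}(\Omega)\,|K_n^\circ|^{\frac12}$, the lower Mahler bound in \cref{mahler} gives $|K_n^\circ| = V(K_n)/|K_n| \ge 8/V$, so the factor $|K_n^\circ|^{\frac12}$ stays bounded away from $0$. Hence it suffices to prove that $h_{K_n}(\Omega) \to +\infty$, and the first thing I would record is precisely this reduction.

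To control $h_{K_n}(\Omega)$ I would invoke the structure result \cref{thm:struttura}, applicable because $\Omega$ is open, bounded and convex. It provides, for each $n$, a value $\rho_n = 1/h_{K_n}(\Omega)$ characterized by $|\Omega^{\rho_n}| = \rho_n^2\,|K_n| = \rho_n^2 V$, where $\Omega^{\rho_n} = \Omega \ominus \rho_n K_n$. Since $h_{K_n}(\Omega)$ is finite and positive for the bounded convex set $\Omega$, we have $\rho_n \in (0,+\infty)$ and therefore $|\Omega^{\rho_n}| = \rho_n^2 V > 0$; in particular the inner set $\Omega^{\rho_n}$ is nonempty. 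This positivity is the only information I will need from the characterization.

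The geometric heart of the argument is to extract a diameter constraint from the nonemptiness of $\Omega^{\rho_n}$. By definition of the Minkowski difference, any point $y \in \Omega^{\rho_n}$ satisfies $y + \rho_n K_n \subseteq \Omega$, so a translate of $\rho_n K_n$ is contained in $\Omega$. Since the diameter is translation invariant and monotone under inclusion, this forces
\[
\rho_n \diam(K_n) = \diam(\rho_n K_n) \le \diam(\Omega).
\]
As $\Omega$ is fixed while $\diam(K_n) \to +\infty$ by hypothesis, we conclude $\rho_n \le \diam(\Omega)/\diam(K_n) \to 0$, whence $h_{K_n}(\Omega) = 1/\rho_n \to +\infty$. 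Combining with the reduction of the first paragraph, $\J_\Omega[K_n] \ge (8/V)^{\frac12}\,\rho_n^{-1} \to +\infty$, which is the claim.

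I expect the only genuinely subtle point to be the second step: making sure that the value $\rho_n$ furnished by \cref{thm:struttura} corresponds to a nonempty $\Omega^{\rho_n}$, so that the fitting condition ``$\rho_n K_n \subseteq \Omega$ up to translation'' is actually available; everything then reduces to the elementary diameter inequality. The tempting dead end to avoid is the naive attempt to make $|K_n^\circ|$ large, since the Mahler volume pins $|K_n^\circ|$ between $8/V$ and $\pi^2/V$, so the divergence is entirely a statement about the Cheeger constant and not about the polar body.
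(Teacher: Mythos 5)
Your proof is correct and follows essentially the same route as the paper: both use \cref{thm:struttura} to identify $\rho_n = 1/h_{K_n}(\Omega)$ as the dilation factor of a translate of $K_n$ that must fit inside the bounded set $\Omega$ (forcing $\rho_n \to 0$ when $\diam(K_n)\to+\infty$), and both use the lower Mahler bound from \cref{mahler} to keep $|K_n^\circ|$ bounded away from zero. Your only addition is the explicit justification, via $|\Omega^{\rho_n}| = \rho_n^2 V > 0$, that the inner set is nonempty, a point the paper leaves implicit in its description of the Cheeger set as a union of translates of $\rho_n K_n$ contained in $\Omega$.
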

\begin{proof}
By \cref{thm:struttura} the Cheeger set $C_{K_n}$ of $\Omega$ associated to the anisotropy $K_n$ is given by 
\[
C_{K_n} = \bigcup \rho_n K_n, 
\]
where the union is taken among all dilations of the Wulff shape $K_n$ by $\rho_n$ that are contained in $\Omega$. Since $\diam(K_n) \to +\infty$ as $n \to +\infty$, it must necessarily hold $\rho_n \to 0$. Recalling that $h_{K_n}(\Omega)=\rho_n^{-1}$, one infers that $h_{K_n}(\Omega) \to +\infty$ as $n \to +\infty$. By the reverse Mahler inequality contained in \cref{mahler}, the quantities $|K_n^\circ|$ are uniformly bounded from below. Therefore, $\lim_{n \to +\infty} \J_\Omega[K_n] = +\infty$.
\end{proof}

\begin{cor}
Let $\Omega \subset \R^2$ be an open, bounded, convex set.  Then, \[ \sup_{K \in \K_2} \J_\Omega[K] = +\infty.\]

\end{cor}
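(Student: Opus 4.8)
The plan is to deduce the corollary directly from \cref{maximization} by exhibiting a concrete sequence of anisotropies whose diameters diverge while their volumes stay fixed. Since the supremum is taken over all $K \in \K_2$, it suffices to produce a single sequence $\{K_n\}_{n \in \N} \subset \K_2$ along which $\J_\Omega[K_n] \to +\infty$; the supremum is then at least this limit, hence $+\infty$.

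First I would fix the target volume, say $V=1$, and construct an explicit family of thin, elongated convex centrally symmetric bodies. The natural choice is a sequence of rectangles $K_n := [-n,n] \times [-\frac{1}{4n}, \frac{1}{4n}]$, which are centrally symmetric, open up to closure, convex, and satisfy $|K_n| = 1$ for every $n$. Their diameter is $\diam(K_n) = 2\sqrt{n^2 + 1/(16n^2)} \to +\infty$ as $n \to +\infty$. One could equally use ellipses of semi-axes $n$ and $1/(\pi n)$, but rectangles keep the volume computation trivial.

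With this sequence in hand, I would simply invoke \cref{maximization} with $V=1$: since $|K_n|=1$ for all $n$ and $\diam(K_n) \to +\infty$, the proposition gives $\lim_{n\to+\infty}\J_\Omega[K_n] = +\infty$. Consequently
\[
\sup_{K \in \K_2} \J_\Omega[K] \ge \lim_{n\to+\infty}\J_\Omega[K_n] = +\infty,
\]
which forces the supremum to equal $+\infty$. This completes the argument.

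There is essentially no obstacle here, since all the analytic work has already been carried out in \cref{maximization}: that proposition packages the divergence of $h_{K_n}(\Omega)$ (coming from the diameter blow-up, via \cref{thm:struttura}) together with the uniform lower bound on $|K_n^\circ|$ (coming from the reverse Mahler inequality in \cref{mahler}). The only thing the corollary contributes is the observation that a volume-fixed, diameter-diverging sequence actually exists inside $\K_2$, which the rectangular family demonstrates. The mildest point of care is to confirm the chosen bodies genuinely lie in $\K_2$ (open, bounded, convex, centrally symmetric), which the rectangles clearly do.
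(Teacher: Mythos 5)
Your proof is correct and matches the paper's intent: the corollary is stated as an immediate consequence of \cref{maximization}, and your explicit rectangle sequence $K_n=(-n,n)\times\bigl(-\tfrac{1}{4n},\tfrac{1}{4n}\bigr)$ simply supplies the (trivial but necessary) witness that a fixed-volume, diameter-diverging sequence exists in $\K_2$. Nothing further is needed.
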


We now prove that our shape functional $K\mapsto \J_\Omega[K]$ has a minimizer.

\begin{prop}\label{minimization}
There exists $\hat{K} \in \K_2$ such that
\[ 
\J_\Omega[\hat{K}] = \min_{K \in \K_2} \J_\Omega[K].
\]
\end{prop}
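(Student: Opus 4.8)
The plan is to apply the direct method of the calculus of variations. First I observe that $\J_\Omega$ is invariant under dilations $K \mapsto \lambda K$: indeed $\Phi_{\lambda K}^\circ = \lambda \Phi_K^\circ$ gives $h_{\lambda K}(\Omega) = \lambda\, h_K(\Omega)$, while $(\lambda K)^\circ = \lambda^{-1} K^\circ$ gives $|(\lambda K)^\circ|^{1/2} = \lambda^{-1}|K^\circ|^{1/2}$, so the two scalings cancel. I may therefore take a minimizing sequence $\{K_n\} \subset \K_2$ with $\J_\Omega[K_n] \to m := \inf_{\K_2}\J_\Omega$ and, using the dilation invariance, normalize it so that $|K_n| = 1$ for every $n$. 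The value $m$ is finite since $\J_\Omega[B] < \infty$, and by \cref{mahler} the normalization forces $|K_n^\circ| = V(K_n) \in [8,\pi^2]$; hence the polar volumes stay bounded away from $0$ and $\infty$, and together with $\J_\Omega[K_n] \to m$ this keeps $h_{K_n}(\Omega)$ bounded above.

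The heart of the argument is compactness of this sequence, and this is exactly where \cref{maximization} enters. If $\diam(K_n)$ were unbounded, a subsequence would satisfy $\diam(K_n) \to +\infty$ with $|K_n| = 1$ fixed, whence \cref{maximization} would force $\J_\Omega[K_n] \to +\infty$, contradicting minimality; thus $\diam(K_n) \le D$ for some $D$, and central symmetry yields $K_n \subset B_{D/2}$. To exclude the opposite degeneracy (collapse to a lower-dimensional set), I use that any planar convex body satisfies $|K| \le \diam(K)\cdot w(K)$, with $w$ its minimal width, so $w(K_n) \ge 1/D$; since each $K_n$ is centrally symmetric, its inradius equals $w(K_n)/2$, giving $B_{1/(2D)} \subset K_n$. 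With these uniform two-sided ball bounds I invoke the Blaschke selection theorem to extract a subsequence $K_n \to \hat K$ in the Hausdorff distance. The limit $\hat K$ is convex, centrally symmetric (this property passes to Hausdorff limits), and satisfies $B_{1/(2D)} \subset \hat K$ because the inclusion $B_{1/(2D)} \subset K_n$ survives in the limit; hence $\hat K$ is non-degenerate and $\hat K \in \K_2$.

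It remains to show $\J_\Omega[\hat K] = m$, which I establish by proving $\J_\Omega[K_n] \to \J_\Omega[\hat K]$ through a squeezing argument based only on the monotonicity of polar norms under inclusion. For every $\e > 0$ and $n$ large one has $(1-\e)\hat K \subset K_n \subset (1+\e)\hat K$, and since $\Phi_K^\circ$ is increasing in $K$ this gives the pointwise estimate $(1-\e)\Phi_{\hat K}^\circ \le \Phi_{K_n}^\circ \le (1+\e)\Phi_{\hat K}^\circ$ (the same inequality underlying \cref{uniformcontinuity}). Integrating over the reduced boundary yields $(1-\e)\per_{\hat K}(E) \le \per_{K_n}(E) \le (1+\e)\per_{\hat K}(E)$ for every competitor $E$, and passing to the infimum in the definition of $h$ gives $(1-\e)h_{\hat K}(\Omega) \le h_{K_n}(\Omega) \le (1+\e)h_{\hat K}(\Omega)$, so $h_{K_n}(\Omega) \to h_{\hat K}(\Omega)$. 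Dually, polarity reverses these inclusions, producing $(1+\e)^{-1}\hat K^\circ \subset K_n^\circ \subset (1-\e)^{-1}\hat K^\circ$ and hence $|K_n^\circ| \to |\hat K^\circ|$ by the planar scaling of area. Multiplying the two limits gives $\J_\Omega[\hat K] = \lim_n \J_\Omega[K_n] = m$, so $\hat K$ is the sought minimizer.

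I expect the only genuine obstacle to be the compactness step, specifically ruling out that the minimizing anisotropies degenerate by spreading out while keeping fixed area; this is precisely the content of \cref{maximization}, and once a non-degenerate Hausdorff limit is secured, the continuity of $\J_\Omega$ reduces to the elementary two-sided inclusion estimate above.
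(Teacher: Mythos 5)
Your proof is correct, and while its compactness half (normalization by scale invariance, the Mahler bound, exclusion of $\diam(K_n)\to+\infty$ via \cref{maximization}, Blaschke selection) coincides with the paper's, the semicontinuity half takes a genuinely different and more elementary route. The paper only proves \emph{lower} semicontinuity of $\J_\Omega$ along the sequence: it takes the Cheeger sets $C_{K_n}$, uses \cref{thm:struttura} to get their convexity, extracts a second Hausdorff-convergent subsequence, invokes the lower semicontinuity of the anisotropic perimeter under $L^1$ convergence with locally uniformly converging anisotropies (\cite[Prop.~2.1]{Neu16}), and needs a separate argument that the limit candidate set is non-negligible. You instead prove \emph{full continuity} of both factors: from the two-sided inclusion $(1-\e)\hat K \subset K_n \subset (1+\e)\hat K$ and the monotonicity of $K \mapsto \Phi_K^\circ$ under inclusion, you sandwich $\per_{K_n}$ between $(1-\e)\per_{\hat K}$ and $(1+\e)\per_{\hat K}$ for \emph{every} competitor, so the bounds survive the infimum in both directions and give $h_{K_n}(\Omega)\to h_{\hat K}(\Omega)$, while polarity reverses the inclusions and gives $|K_n^\circ|\to|\hat K^\circ|$. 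This bypasses the existence and structure of Cheeger sets and the external semicontinuity result entirely, and it makes the continuity step dimension-free: in your argument only \cref{maximization} (whose proof relies on the planar structure theorem) confines the existence result to $\K_2$. A further merit is that your inradius bound $B_{1/(2D)}\subset K_n$, obtained from $|K|\le \diam(K)\,w(K)$ and the fact that the inradius of a centrally symmetric body is half its minimal width, explicitly rules out degeneration of the Hausdorff limit; the paper leaves this implicit (there it follows from continuity of the volume, so $|\hat K| = V > 0$), yet non-degeneracy is needed in both proofs, since the inclusion underlying \cref{uniformcontinuity} requires the origin to be an interior point of $\hat K$.
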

\begin{proof}
Let $\{K_n\}_{n \in \N}$ be a minimizing sequence. Without loss of generality, we can suppose that $|K_n|=V$ for some $V>0$. By Mahler's inequality, the volumes of the polar sets $K_n^\circ$ are uniformly bounded from above. If $\diam(K_n) \to +\infty$ as $n \to +\infty$, by \cref{maximization} it would hold $\J_\Omega[K_n] \to +\infty$, a contradiction. Therefore, $\diam(K_n)$ is uniformly bounded. By Blaschke's Selection Theorem \cite[Thm.~1.8.7]{Sch14book}, there exists $\hat{K} \in \K_2$ such that $K_n \to \hat{K}$ in the Hausdorff topology. By \cref{uniformcontinuity}, 
\[ 
\Phi_{K_n}^\circ \to \Phi_{K}^\circ \quad \text{locally uniformly,}
\]
from which we also infer the $L^1_{\text{loc}}$ convergence of $K^\circ_n$ to $\hat{K}^\circ$, and thus $|\hat{K}^\circ| \le \liminf_n |K^\circ_n|$.
Let $C_{K_n}$ be the Cheeger sets associated to $h_{K_n}(\Omega)$. By \cref{thm:struttura} all these sets are convex, and they are contained in the bounded set $\Omega$. Again by Blaschke's Selection Theorem, up to a subsequence which we do not relabel, there exists a set $\hat{C} \subset \Omega$ that is the Hausdorff limit of the sequence. In particular, $C_{K_n}$ converge to $\hat{C}$ in the sense of characteristic functions. By~\cite[Prop.~2.1]{Neu16} and the local uniform convergence of $\Phi_{K_n}^\circ$ to $\Phi_{K}^\circ$ it holds
\[ 
\per_{\hat{K}}(\hat{C}) \leq \liminf_{n \to +\infty} \per_{K_n}(C_{K_n}).
\]
Moreover, $|C_{K_n}|\to |\hat{C}|$ and $|\hat{C}|>0$ as otherwise, also taking into account that $|K^\circ_n|$ is uniformly bounded from above, the anisotropic isoperimetric inequality would contradict the fact that $K_n$ is a minimizing sequence for $\mathcal{J}_\Omega$. Therefore,
\begin{align*}
h_{\hat{K}}(\Omega) |\hat{K}^\circ|^{\frac{1}{2}} 
&
\leq 
\frac{\per_{\hat{K}}(\hat{C})}{|\hat{C}|}|\hat{K}^\circ|^{\frac{1}{2}} 
\\
&
\leq \liminf_{n \to +\infty} \frac{\per_{K_n}(C_{K_n})}{|C_{K_n}|}|K_n^\circ|^{\frac{1}{2}} 
= 
\liminf_{n \to +\infty} h_{K_n}(\Omega)|K_n^\circ|^{\frac{1}{2}} 
\end{align*}
which means
\[ \J_\Omega[\hat{K}] \leq \liminf_{n \to +\infty} \J_\Omega[K_n].\]
Since $\hat{K} \in \K_2$ and $\{K_n\}_{n\in \N}$ is a minimizing sequence, the claim follows.
\end{proof}

\section{Examples}\label{sec:example}

In this section we provide a few examples in order to highlight how the problem is far from being trivial. We have already made the useful observation that one can rewrite the functional $\J_\Omega[K]$ in terms of $\mathcal{F}_\Omega[K]$ as
\begin{equation}\label{eq:competition}
\J_\Omega[K] = \mathcal{F}_\Omega[K] (|K| |K^\circ|)^{\frac12} = \mathcal{F}_\Omega[K]  V(K)^{\frac12}.
\end{equation}
This easily allows to infer that there cannot be a set $K\in \K_2$ that always minimizes $\J_\Omega[K]$ independently of the choice of $\Omega$.

\subsection{\texorpdfstring{$\Omega$}{Omega} is a parallelogram}

Let $T$ be an invertible affine transformation, and consider the parallelogram given by $T(Q)$, where $Q$ is the unit square in $\R^2$. By \cref{mahler} and the fact that the Mahler volume is invariant with respect to invertible affine transformations,
\[ V(T(Q)) \leq V(K) \]
for every $K \in \K_2$. Moreover as noticed in the introduction, see~\eqref{eq:minimization_F}, $T(Q)$ is the unique minimizer of $\mathcal{F}_{T(Q)}[K]$. Hence,
\[
\J_{T(Q)}[T(Q)] = \mathcal{F}_{T(Q)}[T(Q)] \cdot V(T(Q))^\frac{1}{2} \leq \mathcal{F}_{T(Q)}[K] \cdot V(K)^\frac{1}{2} = \J_{T(Q)}[K]
\]
for every $K \in \K_2$, the strict inequality holding whenever $K \neq T(Q)$.

\subsection{\texorpdfstring{$\Omega$}{Omega} is a ball}
Let $\Omega$ be the Euclidean ball $B$ of unit radius. In this section we show that the minimizing convex body $K\in \K_2$ for $\J_B[K]$ can not be the ball itself. More in general, the square $Q$ provides the lowest energy among all possible regular $n$-gons.

Using~\eqref{eq:competition}, \cref{mahler} and equality~\eqref{eq:minimization_F} we have as benchmark
\[
\inf_{K\in \K_2} \J_{B}[K] \le \mathcal{F}_{B}[B] \cdot V(B)^{\frac 12} = 2\sqrt{\pi}\,.
\]

We let $P^*_n$ be a regular $n$-gon, with $n\ge 4$ even, circumscribed to $B$. With this choice, we have the following
\begin{align}
|P^*_n| &= n\tan(\sfrac \pi n)\,, \qquad &|(P^*_n)^\circ| &= n\sin(\sfrac \pi n) \cos(\sfrac \pi n)\,,\label{eq:area_p_e_polare}
\end{align}
where the first one is a well-known fact of Euclidean geometry, while the second equality comes from~\cite[Cor.~4]{BMMR13}. We remark that the polar body of $P^*_n$ coincides with a rotation and a dilation by $\cos(\sfrac{\pi}{n})$ of $P^*_n$. In particular, we have the following information on the side length $\side(\cdot)$, the apothem $\ap(\cdot)$, and the circumradius $\circum(\cdot)$ of $P^*_n$ and its polar body $(P^*_n)^\circ$
\begin{alignat}{6}
\side(P^*_n)&= 2\tan(\sfrac \pi n)\,,\qquad &\side((P^*_n)^\circ) &= 2\sin(\sfrac \pi n)\,,\nonumber\\
\ap(P^*_n)&= 1\,, &\ap((P^*_n)^\circ) &= \cos(\sfrac \pi n)\,,\label{eq:proprieta_PePolare}\\
\circum(P^*_n)&= \cos(\sfrac \pi n)^{-1}\,, &\circum((P^*_n)^\circ) &= 1\,,\nonumber
\end{alignat}
that we shall use through the next computations.

Given the symmetry of $B$, any choice of $K$ yields the same anisotropic constant of $T(K)$, for any rotation $T\in \mathrm{SO}(2)$. Hence without loss of generality, we can suppose $P^*_n$ to be rotated in such a way that it has two sides parallel to the $y$-axis, and thus its polar body $(P^*_n)^\circ$ has one diagonal on the $x$-axis, see \cref{fig:P_and_polar}. 

\begin{figure}[t]
\begin{tikzpicture}
 \node[regular polygon, 
    regular polygon sides = 6, 
    minimum size = 3.463cm,
    rotate=30,
    draw] (p) at (0,0) {};
    \draw[dotted] (0,0) circle (1.5cm);
    \draw[dashed] (0,0) -- (1.5,0);
    \draw[dashed] (0,0) -- (30:1.732);
     \node[regular polygon, 
    regular polygon sides = 6, 
    minimum size = 3cm,
    draw] (p) at (5,0) {};
    \draw[dotted] (5,0) circle (1.5cm);
    \draw[dashed] (5,0) -- (6.5,0);
    \draw[dashed] (5,0) -- +(30:1.299);
\end{tikzpicture}
\caption{On the left the Wulff shape $P^*_n$ and on the right its polar body $(P^*_n)^\circ$ inducing the metric $\Phi_{P^*_n}^\circ$, for $n=6$. The unit radius disk appears dotted.\label{fig:P_and_polar}}
\end{figure}
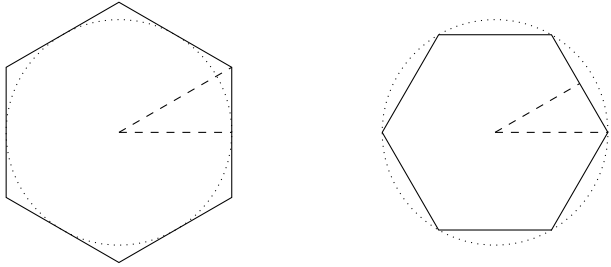
By the symmetry of $B$, of $P^*_n$ and by \cref{thm:struttura}, the boundary of the minimizer is made of $n$ straight sides parallel to those of $P^*_n$ with endpoints on $\fr B$ and $n$ circular and symmetric arcs of $\fr B$, as in \cref{fig:Cheeger_K_ball}. We consider the one-parameter family of competitors $E_x$ that have this particular structure, being the parameter $x=x(n)$ half the length of one of the straight sides. Given the symmetric nature of our setting, we can divide the plane $\R^2$ in $2n$ symmetric circular sectors and compute the area and the anisotropic perimeter of these candidates in just one of these.

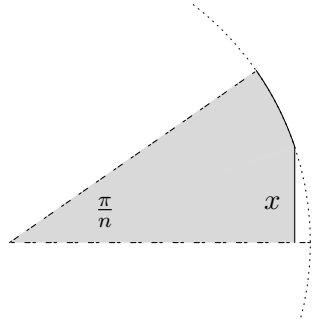
\begin{figure}[t]
\centering
\begin{tikzpicture}[line cap=round,line join=round,>=triangle 45,x=1.0cm,y=1.0cm]
\begin{scope}[yscale=-1,xscale=1, rotate=-90]
\clip(-1.,-0.2) rectangle (3.2,4.2);
\fill[line width=0.pt,color=eqeqeq,fill=eqeqeq,fill opacity=1.0] (0.,0.) -- (1.273,3.791) -- (0.,3.791) -- cycle;
\draw [shift={(0.,0.)},line width=0.pt,color=eqeqeq,fill=eqeqeq,fill opacity=1.0]  (0,0) --  plot[domain=0.962:1.246,variable=\t]({1.*4.*cos(\t r)+0.*4.*sin(\t r)},{0.*4.*cos(\t r)+1.*4.*sin(\t r)}) -- cycle ;
\draw [line width=0.4pt,dash pattern=on 1pt off 1pt on 2pt off 4pt] (0.,4.)-- (0.,0.);
\draw [line width=0.4pt] (0.,3.791)-- (1.273,3.791);
\draw [line width=0.4pt,dash pattern=on 1pt off 1pt on 2pt off 4pt] (0.,0.)-- (2.285,3.282);
\draw [shift={(0.,0.)},line width=0.4pt,dotted]  plot[domain=0.383:1.976,variable=\t]({1.*4.*cos(\t r)+0.*4.*sin(\t r)},{0.*4.*cos(\t r)+1.*4.*sin(\t r)});
\draw [shift={(0.,0.)},line width=0.4pt]  plot[domain=0.962:1.246,variable=\t]({1.*4.*cos(\t r)+0.*4.*sin(\t r)},{0.*4.*cos(\t r)+1.*4.*sin(\t r)});
\end{scope}
\draw (3.25,0.75) node[anchor=north west] {$x$};
\draw (1.,0.75) node[anchor=north west] {$\frac \pi n$};
\end{tikzpicture}
\caption{The shape of the Cheeger set in a sector of width $\sfrac \pi n$ w.r.t.~the anisotropy given by the regular $n$-gon.}\label{fig:Cheeger_K_ball}
\end{figure}
The area in the sector $S_i$ is given by the area of a triangle with base $\sqrt{1-x^2}$ and height $x$ plus the area of a circular sector of radius $1$ and angle $\sfrac \pi n - \arcsin(x)$, yielding
\begin{equation}\label{eq:area_competitor}
|E_{x}| = 2n\left(\frac 12 x \sqrt{1-x^2} + \frac{\pi}{2n}  - \frac 12 \arcsin(x)\right)\,.
\end{equation}
Concerning the perimeter, it is immediate that the Euclidean one is $x$ plus $\sfrac \pi n - \arcsin(x)$, but we should take into account the presence of the anisotropy. The straight side, of Euclidean length $x$, has constant normal given by the horizontal direction $e_1$. By the choices we made, see~\eqref{eq:proprieta_PePolare}, the greatest $y\in \R_+$ such that $ye_1 \in (P^*_n)^\circ$ is $y=1$, thus $\Phi_{P^*_n}^\circ(e_1)=1$. Hence, this straight side has anisotropic perimeter equal to the Euclidean one. Concerning the circular arc, we can parametrize it as $\gamma(\theta) = (\cos\theta, \sin \theta)$ for $\theta \in [\arcsin(x), \sfrac \pi n]$, and thus
\[
\int_{\fr B\cap \gamma} \Phi_{P^*_n}^\circ(\nu_{B}(y)) \de \calH^1(y) = \int_{\arcsin(x)}^{\frac \pi n} \Phi_{P^*_n}^\circ(\theta)\|\dot \g (\theta)\|\de \theta = \int_{\arcsin(x)}^{\frac \pi n} \Phi_{P^*_n}^\circ(\theta)\de \theta\,.
\]
Hence, it is just a matter of computing the anisotropy $\Phi_{P^*_n}^\circ(\theta)$. Given our initial assumptions, the point $e_1$ is a vertex of $(P^*_n)^\circ$, and the $x$-axis splits the interior angle with vertex in $e_1$ in two equal angles of width $\sfrac{(n-2)\pi}{2n}$. Let $y\theta$ belong to the boundary $(P^*_n)^\circ$, for $\theta \in (0, \sfrac \pi n)$. 
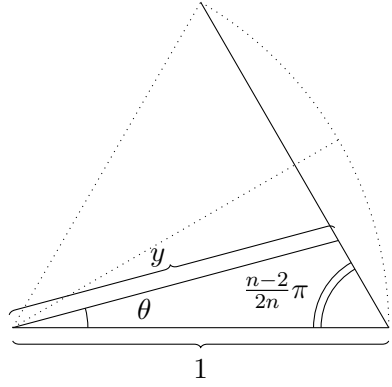
\begin{figure}[t]
\centering
\begin{tikzpicture}[x=5.0cm,y=5.0cm]
\draw[dotted] (0,0)--+(60:1);
\draw (0,0)--(1,0);
\draw[decoration={brace,mirror,raise=5pt},decorate] (0,0) -- node[below=8pt] {$1$} (1,0);
\draw[dotted] (1,0) arc (0:60:1);
\draw[dotted] (0,0)--+(30:1);
\draw (1,0)--+(120:1);
\draw (0,0)--+(15:0.896);
\draw[decoration={brace,raise=5pt},decorate] (0,0) -- node[above=10pt, left] {$y$} +(15:0.896);
\draw (0.2,0) arc (0:15:0.2);
\node at (0.35,0.05) {$\theta$};
\draw (0.8,0) arc (180:120:0.2);
\draw (0.82,0) arc (180:120:0.18);
\node at (0.7,0.1) {$\frac{n-2}{2n}\pi$};
\end{tikzpicture}
\caption{Close up of the unit ball in the metric $\Phi_{P^*_n}^\circ$. The dots individuate the sectors of the Euclidean unit ball $B$ with angles $\sfrac \pi n$ and $\sfrac{2\pi}{n}$.}\label{fig:Phi}
\end{figure}
Using the law of sines, see also \cref{fig:Phi}, we obtain the following equality
\[
y = \frac{\sin\left(\frac{n-2}{2n}\pi \right)}{\sin\left(\pi - \frac{n-2}{2n}\pi - \theta\right)} = \frac{\cos\left(\sfrac{\pi}{n} \right)}{\cos\left(\sfrac{\pi}{n} - \theta\right)},
\]
and, since $\Phi_{P^*_n}^\circ(\theta) = y^{-1}$, we eventually get
\[
\Phi_{P^*_n}^\circ(\theta) = \frac{\cos\left(\sfrac{\pi}{n} - \theta\right)}{\cos\left(\sfrac{\pi}{n} \right)}.
\]
Hence,
\begin{equation}\label{eq:per_competitor}
\begin{split}
\per_{P^*_n}(E_{x}) &= 2n\left(  x + \frac{1}{\cos\left(\frac{\pi}{n} \right)} \int_{\arcsin(x)}^{\frac \pi n} \cos\left(\frac{\pi}{n} - \theta\right) \de \theta \right)\\
&= 2n\left(  x + \frac{\sin\left(\frac{\pi}{n} - \arcsin(x)\right)}{\cos\left(\frac{\pi}{n} \right)}\right).
\end{split}
\end{equation}
By~\eqref{eq:area_competitor} and~\eqref{eq:per_competitor}, it follows that the Cheeger set $C_n$ of $B$, in the metric for which $P^*_n$ is the Wulff shape, coincides with $E_{x}$, for the value $\bar{x}_n$ that minimizes the ratio
\begin{equation}\label{eq:to_minimize}
\frac{2}{\cos\left(\frac{\pi}{n} \right)} \frac{x \cos\left(\frac{\pi}{n} \right) + \sin\left(\frac{\pi}{n} - \arcsin(x)\right)}{ x \sqrt{1-x^2} + \frac{\pi}{n}  - \arcsin(x)}\,.
\end{equation}
Further, recall that \cref{thm:struttura} states that the Cheeger set is the union of $\rho P^*_n$, where $\rho$ is the inverse of the Cheeger constant. Thus, if $\bar{x}_n$ is minimizing~\eqref{eq:to_minimize} we also have the following equality
\[
2\bar{x}_n = \side(\rho P^*_n) = 2 \rho \tan(\sfrac \pi n)\,,
\]
hence the minimizing $\bar{x}_n$ is a solution of
\[
\frac{1}{x}  \tan\left(\frac \pi n\right)=  \frac{2}{\cos\left(\frac{\pi}{n} \right)} \frac{x \cos\left(\frac{\pi}{n} \right) + \sin\left(\frac{\pi}{n} - \arcsin(x)\right)}{ x \sqrt{1-x^2} + \frac{\pi}{n}  - \arcsin(x)},
\]
and
\begin{equation}\label{value_h_P*_n}
h_{P^*_n}(B) = \frac{1}{\bar{x}_n}  \tan\left(\frac \pi n\right).
\end{equation}
At this point, use the trigonometric identity
\[ 
\sin\left(\frac{\pi}{n} - \arcsin(x)\right) = \sin\left(\frac{\pi}{n}\right)\sqrt{1-x^2} - x \cos\left(\frac{\pi}{n}\right) 
\]
to simplify the identity into
\[
\frac{1}{x} =   \frac{2\sqrt{1-x^2}}{ x \sqrt{1-x^2} + \frac{\pi}{n}  - \arcsin(x)}	
\]
and hence $\bar{x}_n$ solves
\begin{equation}\label{eq:other_eq_to_find_h}
\arcsin(x) + x\sqrt{1-x^2} = \frac{\pi}{n}.
\end{equation}
Since the function on the LHS is increasing in $x$, there exists a unique solution, and the function $n \mapsto \bar{x}_n$ is decreasing, so that the maximum value of $\bar{x}_n$ is attained for $n=4$. Unfortunately this equation cannot be solved explicitly for $x$, but its unique solution for each $n$ can be numerically computed: a plot of the map $n \mapsto \bar{x}_n$ is shown in \cref{fig:graph_xn}, and some values are collected in \cref{tab:some_n}.

The lack of an explicit expression for $\bar{x}_n$ prevents us from having an explicit value of $ \J_{B}[P^*_n]$. Nevertheless, we can easily give an upper and a lower bound to $\bar{x}_n$. On the one hand, as a consequence of the isoperimetric inequality, we can estimate $h_{P^*_n}(B)$ from below by $h_{P^*_n}(\rho P^*_n)$, where $\rho$ is such that $|\rho P^*_n| = |B|=\pi$. This gives
\begin{equation}\label{eq:stima_dal_basso}
\frac{1}{\bar{x}_n}  \tan\left(\frac \pi n\right) = h_{P^*_n}(B) \ge h_{P^*_n}(\rho P^*_n) = \frac{2}{\rho}\,.
\end{equation}
By the relations in~\eqref{eq:area_p_e_polare}, we have
\[
\pi = |\rho P^*_n| = \rho^2 |P^*_n| = \rho^2 n \tan\left(\frac \pi n\right)\,,
\]
and solving for $\rho$ and plugging in~\eqref{eq:stima_dal_basso} gives
\[
\bar{x}_n \le \frac{\sqrt{\pi}}{2} \sqrt{\frac{\tan\left(\frac \pi n\right)}{n}}\,.
\]
On the other hand, using as competitor $\cos(\sfrac \pi n) P^*_n$, that is the greatest Wulff shape contained in $B$, by definition of $K$-Cheeger constant we obtain, as a lower bound,
\begin{align*}
\frac{1}{\bar{x}_n}  \tan\left(\frac \pi n\right) 
= h_{P^*_n}(B) 
&\le \frac{\per_{P^*_n}(\cos\left(\frac \pi n\right) P^*_n)}{|\cos\left(\frac \pi n\right) P^*_n|}
\\ 
&= \frac{1}{\cos \left(\frac \pi n\right)} \frac{\per_{P^*_n}(P^*_n)}{|P^*_n|} 
= \frac{2}{\cos \left(\frac \pi n\right)}\,.
\end{align*}
Putting these two inequalities together, we get
\begin{equation}\label{eq:constraint_on_x}
\frac{1}{2} \sin \left(\frac \pi n\right) \le \bar{x}_n \le  \frac{\sqrt{\pi}}{2} \sqrt{\frac{\tan\left(\frac \pi n\right)}{n}}\,.
\end{equation}
This is enough to prove that $\J_B[P^*_n]$ achieves its minimum on $P^*_4$. Indeed, recalling~\eqref{value_h_P*_n} and~\eqref{eq:area_p_e_polare}, one has
\[
\J_{B}[P^*_n] = h_{P^*_n}(B)|(P^*_n)^\circ|^{\frac 12} = \frac{1}{\bar{x}_n}\tan\left(\frac \pi n \right)  \sqrt{n \cos(\sfrac \pi n) \sin(\sfrac \pi n)}
\]
Using the bounds in~\eqref{eq:constraint_on_x}, one has
\[
\frac{2n}{\sqrt{\pi}} \sin\left(\frac{\pi}{n} \right)\le \J_{B}[P^*_n] \le 2 \sqrt{n \tan\left(\frac \pi n\right)}\,.
\]
Both LHS and the RHS converge to $2\sqrt{\pi}$, and, in particular, the LHS is greater than $\J_{B}[P^*_4]$ as soon as $n\ge 12$. In order to conclude, it is enough to check a finite number of values  $\J_{B}[P^*_n]$, corresponding to $n=4, 6, 8, 10$. These, along with some other values, numerically obtained, are reported in \cref{tab:some_n}, along with the value of $\bar{x}_n$ minimizing~\eqref{eq:to_minimize} subject to the constraint~\eqref{eq:constraint_on_x}. Graphs depicting the behavior of $\bar{x}_n$ and $\J_{B}[P^*_n]$ for the first $100$ even numbers $n\ge 4$ are shown in \cref{fig:graph_xn} and \cref{fig:graph_Jn}. We conjecture that $\J_{B}[P^*_n]$ is actually increasing in $n$. 

\begin{figure}[t]
\subfigure[\label{fig:graph_xn}]{
\includegraphics[width=0.45\linewidth, height=5cm,keepaspectratio, trim={0.6cm 0 0 0}, clip]{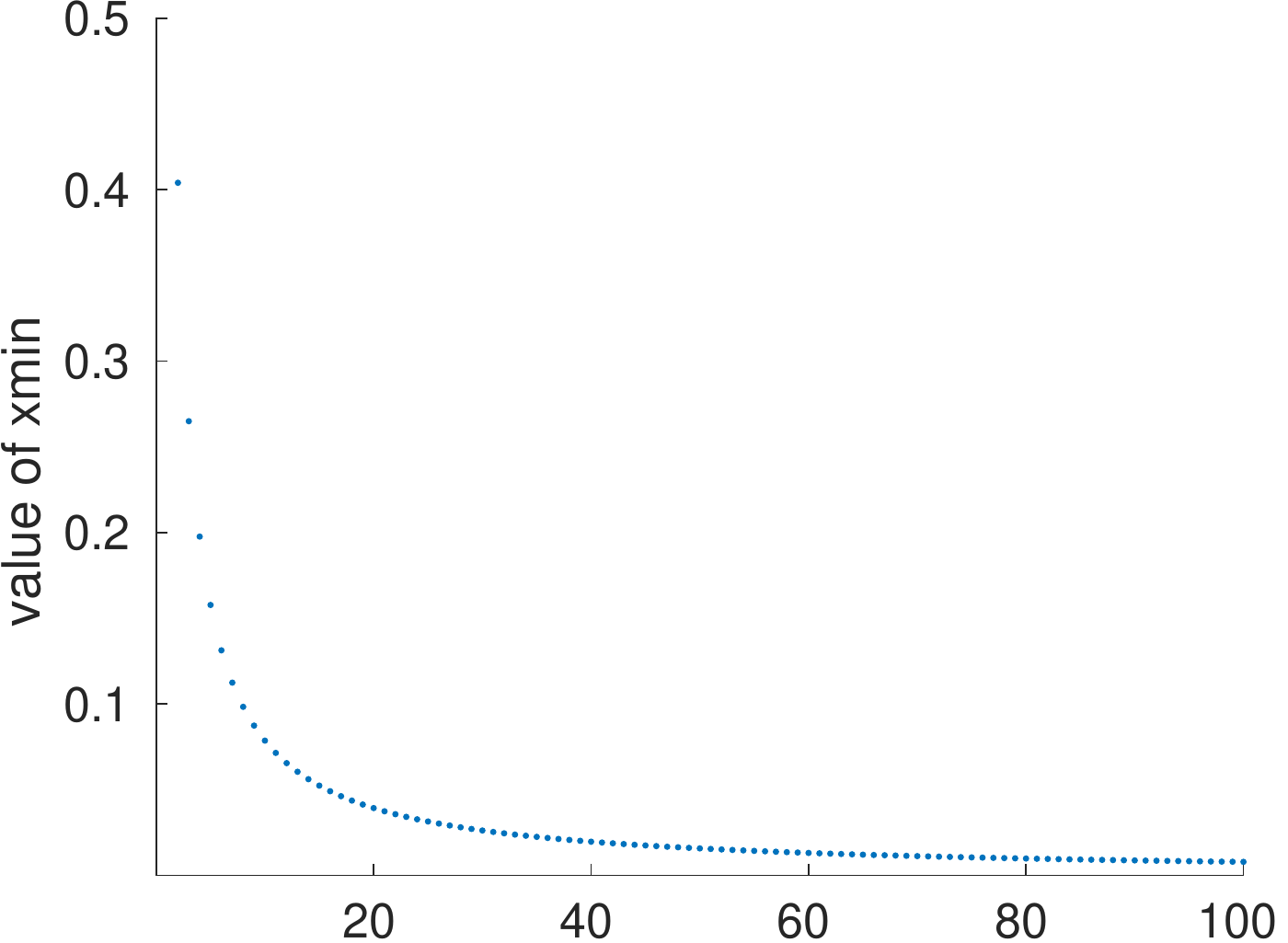}
}
\subfigure[\label{fig:graph_Jn}]{
\includegraphics[width=0.45\linewidth, height=5cm,keepaspectratio, trim={0.6cm 0 0 0}, clip]{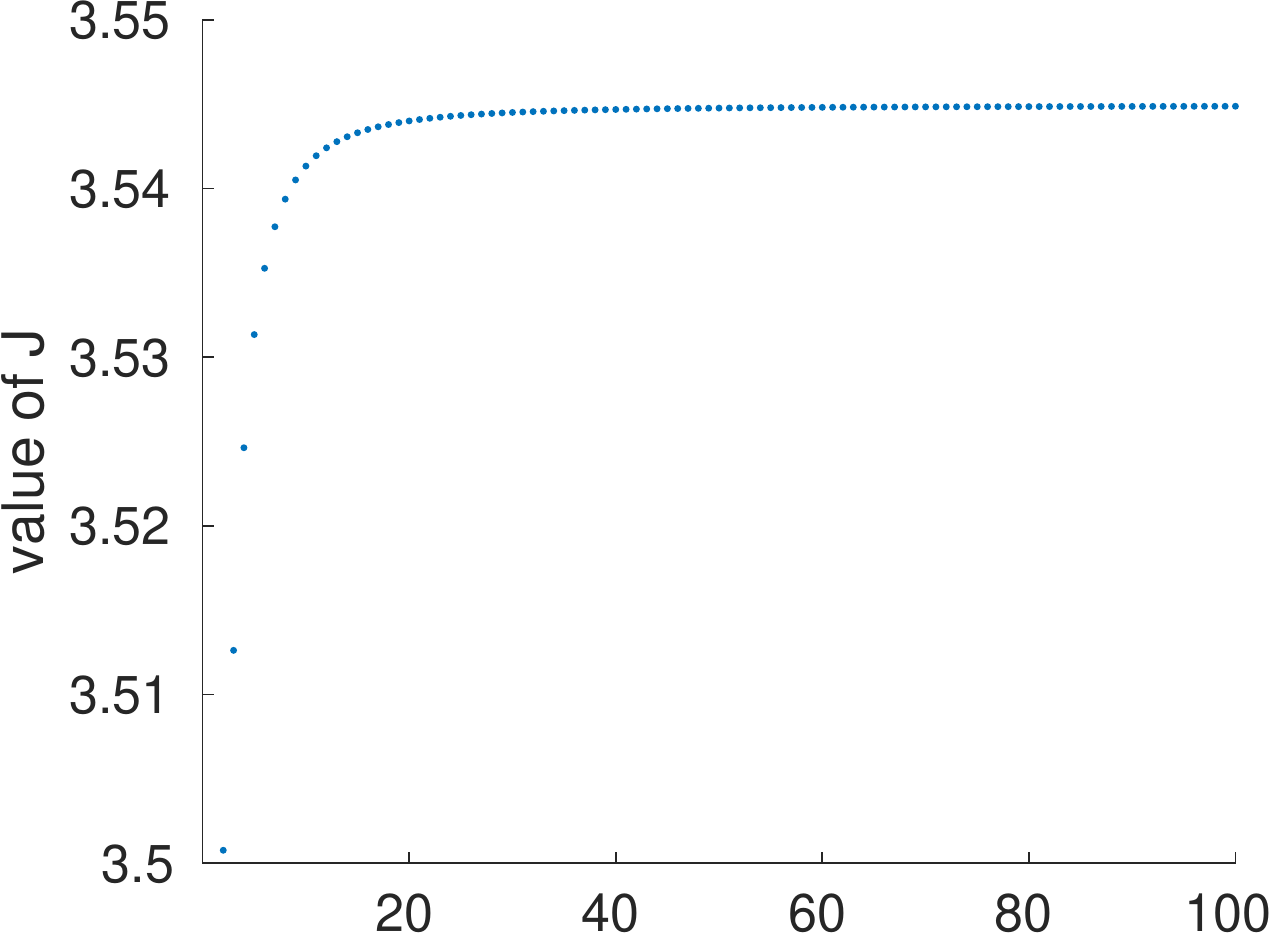}
}
\caption{Graphs of $\bar{x}_n$ (LHS) and of $\mathcal{J}_B[P^*_n]$ (RHS)}
\label{fig:graphs}
\end{figure}
\begin{table}
\centering
\begin{tabular}{lcc c lcc}
& $\bar{x}_n$ & $\J_B[P^*_n]$ & & & $\bar{x}_n$ & $\J_B[P^*_n]$\\
\cmidrule{1-3}\cmidrule{5-7}
$n=4$ & $0.4040\dots$ & $3.5008\dots$ & \qquad & $n=50$ & $0.0315\dots$ & $3.5443\dots$\\
$n=6$ & $0.2649\dots$ & $3.5126\dots$ & & $n=100$ & $0.0158\dots$ & $3.5448\dots$ \\
$n=8$ & $0.1976\dots$& $3.5246\dots$ & & $n=200$ & $0.0079\dots$ & $3.5449\dots$\\
$n=10$ & $0.1578\dots$ & $3.5313\dots$ & & $n=+\infty$ & $0$ & $2\sqrt{\pi}$\\
\bottomrule
\vspace{1pt}
\end{tabular}
\caption{Values of the minimizing half-side $\bar{x}_n$ and of the functional $\J_B[P^*_n]$ for some choices of $n$.}
\label{tab:some_n}
\end{table}

\section{Final remarks and open questions}

In this paper we initiated the study of optimization of anisotropic shape functionals with respect to the anisotropy. We proved that the minimization problem for the functional $\J_\Omega$ is well posed, and we obtained some partial results in the case when $\Omega$ is a ball. We are left with several open questions.
\begin{itemize}
\item Is it possible to show existence of minimizers in the $N$-dimensional case? The difficulty lies in the fact that, apart from the planar case, there is no obvious characterization of $K$-Cheeger sets, which is a key ingredient of the proof of \cref{maximization}.
\item If $\Omega=B$, is it true that $\J_B[P^*_n] \le \J_B[P]$ for any $P\in \mathcal{P}_n$? While this assertion might seem reasonable, we need to observe, bearing in mind equation~\eqref{eq:rewrite}, that the Mahler volume $V$ is actually \emph{maximized}, among all polygons with $n$ sides, by the regular one, see also~\cite{Meyerreisner11}.
\end{itemize}	

The functional that has been considered in this paper involves only purely geometrical quantities. In the literature, several other functionals, involving anisotropic differential operators, have been investigated; we can mention, among others, the first eigenvalue of the anisotropic $p$-Laplacian~\cite{KN08}, which is defined, for $p \in (1,+\infty)$, as
\[ \lambda_p^K(\Omega) := \inf_{u \in W^{1,p}_0(\Omega)} \frac{\int_\Omega (\Phi^\circ(|\nabla v|))^p}{\int_\Omega |u|^p}.\]
It would be interesting to carry a similar analysis for the shape optimization problem
\[ \inf_{K \in \K_N} \lambda_p^K(\Omega) |K^\circ|^\frac{p}{N}. \]
However, this problem is considerably more difficult than the one considered in the present work, since non-geometrical quantities are involved. We remark that the results proved here, combined with some well-known inequalities between the first eigenvalue of the anisotropic Dirichlet $p$-Laplacian and the anisotropic Cheeger constant allow us to say something in the two-dimensional case. Namely, we have the following.

\begin{rem}
The $K$-Cheeger constant provides a lower bound to the first eigenvalue of the anisotropic Dirichlet $p$-Laplacian $\lambda^K_p(\Omega)$ (Cheeger's inequality, see~\cite{KN08}). Then, from \cref{maximization} it follows that, for the scaling invariant functional $K \mapsto  \lambda^K_p(\Omega) |K^\circ|^\frac{p}{2}$, has infinite supremum in $\K_2$.
\end{rem}

\begin{rem}
Besides the lower bound to $\lambda^K_p(\Omega)$  provided by Cheeger's inequality~\cite{KN08}, the $K$-Cheeger constant also provides an upper bound to $\lambda^K_p(\Omega)$, known as Buser's inequality or reverse Cheeger's inequality, refer to~\cite{Par17, Bra20, Fto21, DPdBG20}). Hence, from \cref{minimization} it follows that the shape functional $K\mapsto \lambda^K_p(\Omega)  |K^\circ|^\frac{p}{2}$ has non-zero, finite infimum in $\K_2$. 
\end{rem}

\bibliographystyle{plainurl}

\bibliography{Cheeger-Mahler}

\end{document}